\theoremstyle{plain}
\newtheorem{thm}{Theorem}[section]
\newtheorem{lem}[thm]{Lemma}
\newtheorem{defi}[thm]{Definition}
\newtheorem{rem}[thm]{Remark}
\newcommand{\set}[1]{\left\{#1\right\}}
\newcommand{\R}{\text{$\mathbb{R}$}}
\newcommand{\di}{\displaystyle}
\begin{document}

\title[Uniqueness of Best Approximation]{On the uniqueness of best approximation in Orlicz spaces}

\author[A. Benavente]{Ana Benavente$^1$ \orcidlink{0000-0003-2351-1746}}
\address{$^1$ Instituto de Matem\'{a}tica Aplicada San Luis, UNSL-CONICET and
Departamento de Mate\-m\'{a}tica, FCFMyN, UNSL, Av. Ej\'{e}rcito de los
Andes 950, 5700 San Luis, Argentina.} \email{abenaven@unsl.edu.ar}

\author[J. Costa Ponce]{Juan Costa Ponce$^2$}
\address{$^2$ Instituto de Matem\'{a}tica Aplicada San Luis, UNSL-CONICET and
Departamento de Mate\-m\'{a}tica, FCFMyN, UNSL, Av. Ej\'{e}rcito de los
Andes 950, 5700 San Luis, Argentina.}
\email{costaponcejuan@gmail.com}

\author[S. Favier]{Sergio Favier$^3$ \orcidlink{0000-0002-2395-7652}}
\address{$^3$ Instituto de Matem\'{a}tica Aplicada San Luis, UNSL-CONICET and
Departamento de Mate\-m\'{a}tica, FCFMyN, UNSL, Av. Ej\'{e}rcito de los
Andes 950, 5700 San Luis, Argentina}
\email{sfavier@unsl.edu.ar}

\keywords{Orlicz Spaces, Best Approximation, Characterization of
Best Approximation Operators}

\subjclass[2020]{Primary 46E30; Secondary 41A10, 41A50}%
\thanks{Research partially supported CONICET, Universidad Nacional de San Luis.}%



\begin{abstract}
We study uniqueness of best approximation in Orlicz spaces $L^{\Phi},$ for  different types  of convex functions $\Phi$ and for some finite dimensional approximation classes of functions, where Tchebycheff spaces, and more general approximation ones, are involved. 

\end{abstract}

\maketitle


\section{Introduction and notations}

Let $\Im$ be the class of all  non decreasing functions $\varphi$
defined for all real numbers $t\geq 0,$ such that $\varphi(t)>0$ if $t>0,$  and consider $\Psi$ the class of all convex functions $\Phi$ defined by $\Phi (x) = \int_0^x \varphi (t) \, dt$, for $x\geq 0$, with $\varphi \in \Im$. We  assume a $\Delta_2$ condition for
the functions $\Phi,$ which means that there exists a constant
$\Lambda>0$ such that $\Phi(2 x)\leq \Lambda
\Phi(x),$ for $x\geq 0.$ We also denote by  $\varphi^-$ and  $\varphi^+$  the left and right derivatives of $\Phi$ respectively.

Let $\Phi\in\Psi$ and let $(\Omega,\mathcal{A},m)$ be the Lebesgue
measure space where $\Omega \subset \mathbb{R}$ is a bounded set. We
denote by $L^{\Phi}=L^{\Phi}(\Omega,\mathcal{A},m)$  the Orlicz space given by the class of all
$\mathcal{A}$-measurable functions $f$ defined on $\Omega$ such that
$\int_{\Omega}\Phi(|f|)dx<\infty.$

Given a set $S\subset L^\Phi$, an element $P\in S$ is called
{\em a best $\Phi$-approximation of $f\in L^\Phi$ from the
approximation class $S$} if and only if
$$\int_{\Omega}\Phi(|f-P|)dx=\inf_{Q\in S}\int_{\Omega}\Phi(|f-Q|)dx$$ 
and, in this case, we write $P\in\mu_{\Phi}(f/ S) .$ The
mapping $\mu_\Phi: L^\Phi \to S$ is called the {\em best
$\Phi$-approximation operator} given $S$.

Given a $n-$dimensional linear space $S_n \subseteq C([a,b])$, the existence of a best approximation is assured by Theorem 2.3 in \cite{BFL}.
Recall that $S_n$ is called a Tchebycheff space  if any non zero element in $S_n$ has at most $n-1$ zeros in $[a,b].$

Uniqueness of best approximation  was studied since 1924, for example in \cite{Jackson}, and this problem was extensively developed in $L^1$ in the 70´s. Conditions which assure uniqueness of best $L^1$ approximation have been studied, for instance for  Galkin in \cite{Galkin} and Strauss \cite{Strauss} showed uniqueness in $L^1$ by polynomial splines. In \cite{Micchelli} Micchelli considered best approximation by weak Tchebychev subspaces. For a very mentioned reference of this problem we also have  to cite to DeVore \cite{DeVore}, Kroo \cite{Kroo}, Pinkus \cite{Pinkus} and Strauss \cite{Strauss}. 

We point out that for Orlicz spaces $L^{\Phi} ([a,b])$ it is easy to obtain uniqueness of best $\Phi-$ approximation for a given strictly convex function $\Phi.$ In this paper we present first a characterization theorem of best $\Phi-$ 
 approximation which will be used in the sequel. Next,   we get uniqueness of the best $\Phi-$ approximation,  for a continuous function defined in a real interval $[a,b],$ where we consider a general function $\Phi$  and where the approximation class is a Tchebycheff space which generalizes the classical result in $L^1,$ setted for example  in \cite{Rice}.  Next we get uniqueness of the best $\Phi -$ approximation for more general approximation classes, considering suitable classes of functions $\Phi.$  We set also a sufficient and necessary condition to assure that the best approximation of a continuous function is unique, which is an extension, in some way, to Theorem 22 of \cite{CW}, stablished  in the $L^1$ space. 


\section{Characterization and properties of the best $\Phi-$approximations.}

In this section, we will set a characterization  property of the  best $\Phi-$ approximations, when the approximation class is a finite dimensional vector space $S_n,$ which will be useful to get uniqueness of best $\Phi-$ approximation for continuous functions. On the other hand, it is well known that uniqueness results follows straight forward  if $\Phi$ is a strictly convex function. We deal with   a more general convex functions $\Phi,$ where the  strictly convexity is not required. At the end of this section, we will present a property for the not strictly convex functions that will be used in the study of the uniqueness  for a suitable class of approximation functions.

The following  characterization is a generalization of Theorem 2.2 in \cite{AFL} for the case $\varphi^{+} (0)>0$ and Theorem 2.2 in \cite{Acinas Favier 2016} for the non derivative case of $\Phi.$ 

\begin{thm}\label{thm: caracterización}
    Let $\Phi\in \Psi$ and $f\in L^{\Phi}(\Omega)$. Then  $P\in \mu_{\Phi}(f/S_n)$ if and only if
        \begin{equation*}\label{eq:caract-bpa-f>P}
          \begin{split}
            \di\int_{\{Q>0,~ f>P\}\cup\{Q<0,~ f < P\}} \varphi^{-}(|f-P|)|Q| \,dx& -  \int_{\{Q<0, ~ f>P \}\cup\{Q>0,~ f < P\}} \varphi^{+}(|f-P|)|Q| \,dx \\
                                                                                 & \leq \varphi^{+}(0)\int_{\{ f=P \}} |Q|dx, 
          \end{split}
        \end{equation*}
for any $Q\in S_n$.
\end{thm}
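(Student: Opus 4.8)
The plan is to characterize the minimizer via a first-order (variational) condition. Since $S_n$ is a finite-dimensional vector space and $\Phi$ is convex, the functional $J(Q) = \int_\Omega \Phi(|f-Q|)\,dx$ is convex on $S_n$, so $P$ is a best $\Phi$-approximation from $S_n$ if and only if the one-sided directional derivative $\frac{d}{dt}\big|_{t=0^+} J(P - tQ) \ge 0$ for every $Q \in S_n$ (using that $S_n$ is a subspace, so $-Q$ is also an admissible direction, but a one-sided derivative in each direction is what the non-smoothness forces). So the core of the proof is to compute this one-sided derivative and show it equals the left-hand side of the claimed inequality minus $\varphi^+(0)\int_{\{f=P\}}|Q|\,dx$.

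First I would fix $Q \in S_n$ and analyze $g(t) = \int_\Omega \Phi(|f - P + tQ|)\,dx$ as a function of the real variable $t$ near $0$. The integrand is $t \mapsto \Phi(|(f-P)(x) + tQ(x)|)$; for fixed $x$, this is convex in $t$, and its right derivative at $t=0$ is computed by the chain rule for convex functions. Where $f(x) \neq P(x)$, the map $t \mapsto |(f-P)(x)+tQ(x)|$ is differentiable at $0$ with derivative $\operatorname{sgn}(f-P)(x)\,Q(x)$, and $\Phi$ has right derivative $\varphi^+(|f-P|(x))$ and left derivative $\varphi^-(|f-P|(x))$ there; composing, the right derivative of the integrand at $t=0$ is $\varphi^+(|f-P|)Q$ if $\operatorname{sgn}(f-P)\,Q > 0$ and $\varphi^-(|f-P|)Q$ if $\operatorname{sgn}(f-P)\,Q < 0$ — which, after sorting the four sign combinations of $f-P$ and $Q$, is exactly the integrand of the left-hand side of the theorem (with the appropriate signs so that the $\{Q>0,f>P\}\cup\{Q<0,f<P\}$ part carries $\varphi^-$ and the opposite part carries $-\varphi^+$). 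Where $f(x) = P(x)$, the integrand is $t \mapsto \Phi(|t|\,|Q(x)|)$ whose right derivative at $0$ is $\varphi^+(0)|Q(x)|$. Summing (integrating) these contributions gives $g'(0^+) = \text{LHS} - \varphi^+(0)\int_{\{f=P\}}|Q|\,dx$, and similarly $g'(0^-)$; the minimality of $P$ is equivalent to $g'(0^+) \ge 0$ for all $Q$, which after replacing $Q$ by $-Q$ and combining is equivalent to the stated inequality.

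The main technical obstacle is justifying the interchange of the derivative (a one-sided limit of difference quotients) with the integral — i.e., differentiating under the integral sign for a non-smooth, merely convex integrand. Here I would use the monotonicity of difference quotients of convex functions: for each $x$, the quotient $\frac{\Phi(|(f-P)(x)+tQ(x)|) - \Phi(|f-P|(x))}{t}$ decreases to the right-derivative value as $t \downarrow 0$, so by the monotone convergence theorem (applied to a suitably shifted nonnegative version) the limit passes inside. The $\Delta_2$ condition on $\Phi$ is what guarantees the needed integrability: it ensures $\Phi(|f-P|+|Q|) \in L^1$ whenever $f-P, Q \in L^\Phi$, dominating the difference quotients uniformly for small $|t|$, so dominated convergence applies as well. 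A secondary point to handle carefully is that on the set $\{f = P\}$ the two-sided derivative does not exist (the $|t|$ term), which is precisely why the final condition is an inequality with $\varphi^+(0)\int_{\{f=P\}}|Q|\,dx$ on the right rather than an equality; I would make sure the direction of the inequality is tracked correctly when passing from "$g'(0^+)\ge 0$ for all $Q$" to the displayed form, testing against both $Q$ and $-Q$.
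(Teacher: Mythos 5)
Your proposal is correct and follows essentially the same route as the paper: reduce minimality of the convex functional to nonnegativity of the one-sided directional derivative $F_Q^+(0)$ for every $Q\in S_n$, compute that derivative pointwise by splitting according to the signs of $Q$ and $f-P$ (with the $\varphi^+(0)|Q|$ contribution on $\{f=P\}$), and justify the limit--integral interchange by monotonicity of convex difference quotients together with a $\Delta_2$-based integrable dominant. The only caveat is the sign bookkeeping in your intermediate description of which set carries $\varphi^-$ versus $\varphi^+$ (it depends on whether the perturbation is $+tQ$ or $-\epsilon Q$), but you flag this yourself and the final inequality obtained after testing both $Q$ and $-Q$ matches the statement.
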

\begin{proof}
    For  $P\in \mu_{\Phi}(f/S_n)$,~ $Q\in S_n$ with $Q\not=P$ and $\epsilon\geq 0$, we define $$F_Q(\epsilon)=\int_\Omega  \Phi(|f-(P+\epsilon Q)|)~dx.$$
    Using the convexity of $\Phi,$ we have that $F_Q$ is also a convex function on $[0,\infty)$, then
    \begin{equation*}
     \begin{split}
     F_{Q}(a \epsilon_{1}+b \epsilon_{2})
     &=\int_{\Omega} \Phi\left(\left|(a+b) f-\left((a+b) P+\left(a \epsilon_{1}+b \epsilon_{2}\right) Q\right)\right|\right) dx \\
     & \leq  \di\int_{\Omega} \Phi\left(a\left|f-\left(P+\epsilon_{1} Q\right)\right|+b\left|f-\left(P+\epsilon_{2} Q\right)\right|\right) \,dx \\
     & \leq a \int_{\Omega} \Phi\left(\left|f-\left(P+\epsilon_{1} Q\right)\right|\right) d x+b \int_{\Omega} \Phi\left(\left|f-\left(P+\epsilon_{2} Q\right)\right|\right) \,dx \\
     &= a F_{Q}\left(\epsilon_{1}\right)+b F_{Q}\left(\epsilon_{2}\right),
     \end{split}
   \end{equation*}
for $\epsilon_1,~\epsilon_2\geq 0$, and $a+b=1.$
It follows that
$F_{Q}(0)=\min\limits _{[0, \infty)} F_{Q}(\epsilon),$
and the equality holds if, and only if, $0 \leqslant F_{Q}^+(0)$, where 
$F_{Q}^+(0)$ is the right derivative of $F_{Q}$ in $\epsilon=0$. Now we  compute the derivative.
\begin{equation*}\label{eq:derivative-F-at-0}
\begin{aligned}
F_{Q}^+(0) &=\lim _{\epsilon \rightarrow 0^{+}} \frac{F_{Q}(0+\epsilon)-F_{Q}(0)}{\epsilon} \\
&=\lim _{\epsilon \rightarrow 0^{+}} \frac{1}{\epsilon}\left\{\int_{\Omega} \Phi(|f-(P+\epsilon Q)|) \, dx-\int_{\Omega} \Phi(|f-P|) \, dx\right\}\\
&=\lim _{\epsilon \rightarrow 0^{+}} \int_{\Omega\cap \set{Q\not =0}} \left[ \frac{\Phi(|f-(P+\epsilon Q)|) - \Phi(|f-P|)}{\epsilon Q}\right]Q \,dx.
\end{aligned}
\end{equation*}
To analyze each absolute value in the argument of $\Phi,$ we split the set $\Omega\cap \set{Q\not =0}$ in seven cases, which will yield the following seven integrals:
\begin{equation}\label{eq:F_Q derivative on zero}
\begin{aligned}
F_{Q}^+(0)
=\lim\limits_{\epsilon \rightarrow 0^{+}}
&\left\{\int_{\{Q>0,~P<P+\epsilon Q<f\}}\left[\frac{\Phi(f-P-\epsilon Q)-\Phi(f-P)}{\epsilon Q}\right] Q \, dx\right. \\
&+\int_{\{Q<0,~P+\epsilon Q<P<f\} }\left[\frac{\Phi(f-P-\epsilon Q)-\Phi(f-P)}{\epsilon Q}\right] Q \, dx \\
&+\int_{\{Q<0,~ P+\epsilon Q<f<P\}}\left[\frac{\Phi(f-P-\epsilon Q)-\Phi(P-f)}{\epsilon Q}\right] Q \, dx \\
&+\int_{\{Q<0,~ P+\epsilon Q<f=P\}}\left[\frac{\Phi(f-P-\epsilon Q)-\Phi(P-f)}{\epsilon Q}\right] Q \, dx \\
&+\int_{\{Q>0,~ f \leqslant P < P+\epsilon Q \} }\left[\frac{\Phi(P+\epsilon Q-f)-\Phi(P-f)}{\epsilon Q}\right] Q \, dx \\
&+\int_{\{Q>0,~ P< f \leqslant P+\epsilon Q\}}\left[\frac{\Phi(P+\epsilon Q-f)-\Phi(f-P)}{\epsilon Q}\right] Q \, dx \\
&\left.+\int_{\{Q<0,~ f \leqslant P+\epsilon Q < P\} }\left[\frac{\Phi(P+\epsilon Q-f)-\Phi(P-f)}{\epsilon Q}\right] Q \,dx\right\},
\end{aligned}
\end{equation}
that is,
$F_{Q}^+(0)=
\lim\limits_{\epsilon \rightarrow 0^{+}} \left\{I_1+I_2+I_3+I_4+I_5+I_6+I_7\right\}$.

By the convexity of $\Phi$, for  $0 < \epsilon \leqslant 1$  we have:
\begin{equation}\label{acotacion}
\begin{split}
    \frac{|Q||\Phi(|f-(P+\epsilon Q)|)- \Phi(|f-P|)|}{\epsilon |Q| }
& \leqslant |Q| (\varphi^{+}(|f-P|+|Q|)+\varphi^{+}(|f-P|)).
\end{split}
\end{equation}
The function to the right of the inequality (\ref{acotacion}), is integrable in $\Omega$  because $\varphi^+ \in \Delta_2$, $|P|\leqslant \|P\|_{\infty}$,  $|Q|\leq \|Q\|_{\infty}$ and $|\Omega|<\infty$. By the Dominated Convergence theorem we conclude that:
\begin{equation*}\label{eq:limI_1}
\lim\limits_{\epsilon \rightarrow 0^{+}} I_1=
-\int_{\{Q>0,~ f > P\} } \varphi^{-}(|f-P|) Q \,dx,
\end{equation*}

\begin{equation*}\label{eq:limI_2}
\lim\limits_{\epsilon \rightarrow 0^{+}} I_2=
-\int_{\{Q<0, ~ f > P\}} \varphi^{+}(|f-P|) Q \,dx,
\end{equation*}

\begin{equation*}\label{eq:limI_4}
\begin{split}
\lim\limits_{\epsilon \rightarrow 0^{+}} I_4&
= \lim\limits_{\epsilon \rightarrow 0^{+}}\int_{\{Q<0,~ P+\epsilon Q<f=P\}}\left[\frac{\Phi(0)- \Phi(0 +\epsilon |Q|)}{\epsilon |Q|}\right] Q \,dx\\
&=\varphi^{+}(0)\int_{\{Q<0,~ f=P\}} |Q|dx,
\end{split}
\end{equation*}

\begin{equation*}\label{eq:limI_5}
\begin{split}
\lim\limits_{\epsilon \rightarrow 0^{+}} I_5 &=
 \int_{\{Q>0,~ f \leqslant P\} } \varphi^{+}(|f-P|) Q \,dx\,\\
 &= \int_{\{Q>0,~ f < P\}} \varphi^{+}(|f-P|) Q \,dx\,
 +\,\varphi^{+}(0)\int_{\{Q>0,~ f=P\} }|Q|dx
\end{split}
\end{equation*}

and
\begin{equation*}\label{eq:limI_7}
\lim\limits_{\epsilon \rightarrow 0^{+}} I_7=
\int_{\{Q<0,~ f < P\}} \varphi^{-}(|f-P|) Q \,dx.
\end{equation*}

For the remaining integrals, let us consider  a decreasing and convergent sequence $\left\{\epsilon_{n}\right\}\searrow 0$, and the decreasing sets:
$A_{n}:=\left\{Q<0,~ P+\epsilon_{n} Q<f<P\right\}$,
 then $\bigcap\limits_{n=1}^{\infty} A_{n}= \emptyset$ and:
\begin{equation*}
\begin{split}
 \lim\limits_{n\to\infty} \int_{A_{n}}\left[\frac{\Phi(f-P-\epsilon_n Q)-\Phi(P-f)}{\epsilon_{n} Q}\right] Q dx &\le
\lim\limits_{n\to\infty} \int_{A_n}  \varphi^{+}(|f-P|+|Q|)+\varphi^{+}(|f-P|) dx\\
&= \lim\limits_{n\to\infty} \mu\left(A_{n}\right),
\end{split}
\end{equation*}
and
$\lim\limits_{n \rightarrow \infty} \mu\left(A_{n}\right) = \mu\left(\bigcap\limits_{n=1}^{\infty} A_{n}\right)= \mu(\emptyset)
=0$. Then
$\lim\limits_{\epsilon \rightarrow 0^{+}} I_3=0.$

Now, let us consider the sets $B_{n}=\left\{Q>0,~ P< f \leqslant P+\epsilon_{n} Q\right\} $,
then
$\bigcap\limits_{n=1}^{\infty}B_n=\emptyset$ and again, by the Dominated Convergence Theorem we have
$\lim\limits_{\epsilon \rightarrow 0^{+}} I_6=0$.

Finally, replacing all the results in (\ref{eq:F_Q derivative on zero}), we obtain
\begin{equation*}
\begin{split}
F_{Q}^{+}(0)=&-\int_{\{Q>0,~ f > P\} } \varphi^{-}(|f-P|) Q \,dx -\int_{\{Q<0, ~ f > P\}} \varphi^{+}(|f-P|) Q \,dx, \\
&+\int_{\{Q>0,~ f < P\}} \varphi^{+}(|f-P|) Q \,dx\,
 +\,\varphi^{+}(0)\int_{\{f=P\} }|Q|dx \\
 &+\int_{\{Q<0,~ f < P\}} \varphi^{-}(|f-P|) Q \,dx.
\end{split}
\end{equation*}

As $F_{Q}^{+}(0) \geqslant 0$, we conclude the statement of the theorem.
\end{proof}

For the next result we use the following notations. For a real number $x$ we set $sgn(x) =\frac{x}{|x|},\,\, x\neq 0,$ and $sign(0) = 0.$

\begin{rem}\label{remark caracterizacion}
 If $\Phi (x) = \int_0^x \varphi (t) \, dt$ is a derivable function  then $\varphi^+=\varphi^-=\varphi$, thus  the last result becomes: $P\in\mu_{\Phi}(f/ S_n)$ if and only if
\begin{equation*}
    \di\left|\int_\Omega \varphi(|f-P|) sgn(f-P)Q \,dx\right| \leq \varphi(0^+)\int_{\{ f=P \}} |Q|dx,  
\end{equation*}

for each $Q\in S_n.$

\end{rem}

The following result generalizes the Haar's $L_1-$version theorem in \cite{Jackson}.

\begin{lem}\label{lema 2 mejores app}
    Let $\Phi\in\Psi$ and $S_n$ be an $n-$dimensional linear space of continuous functions in almost every point in the interval $[a,b]$. 
    If $P_1, P_2 \in \mu_{\Phi}(f/S_n)$  for $f\in L^\Phi$, then
    $$[f(x)-P_1(x)][f(x)-P_2(x)]\geq 0$$ for almost every $x\in[a,b]$.
\end{lem}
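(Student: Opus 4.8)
The plan is to argue by contradiction: suppose the set $E=\{x\in[a,b]:(f(x)-P_1(x))(f(x)-P_2(x))<0\}$ has positive measure, and derive a violation of the minimality of $P_1$ (equivalently of $P_2$). The natural competitor is the midpoint $P=\tfrac12(P_1+P_2)\in S_n$. On the complement of $E$ we have, for a.e.\ $x$, that $f(x)-P_1(x)$ and $f(x)-P_2(x)$ have the same sign (or one vanishes), so $|f-P|=\tfrac12|f-P_1|+\tfrac12|f-P_2|$ there, while on $E$ the strict inequality $|f-P|<\tfrac12|f-P_1|+\tfrac12|f-P_2|$ holds pointwise. Using convexity of $\Phi$ one gets $\Phi(|f-P|)\le\tfrac12\Phi(|f-P_1|)+\tfrac12\Phi(|f-P_2|)$ everywhere, hence $\int_\Omega\Phi(|f-P|)\,dx\le\inf_{Q\in S_n}\int_\Omega\Phi(|f-Q|)\,dx$, so $P$ is also a best approximation and all the inequalities along the way are in fact equalities a.e.

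The crux is then to extract from these forced equalities a contradiction with $m(E)>0$. First I would fix the value $c=\int_\Omega\Phi(|f-P_1|)\,dx=\int_\Omega\Phi(|f-P_2|)\,dx=\int_\Omega\Phi(|f-P|)\,dx$ and observe that $\Phi(|f-P|)=\tfrac12\Phi(|f-P_1|)+\tfrac12\Phi(|f-P_2|)$ a.e.\ on $\Omega$. On $E$, write $g=f-P_1$, $h=f-P_2$ with $gh<0$; then $f-P=\tfrac12(g+h)$ and $|g+h|<|g|+|h|$ strictly. Since $\Phi$ is nondecreasing this already gives $\Phi(|f-P|)\le\Phi\big(\tfrac12|g|+\tfrac12|h|\big)\le\tfrac12\Phi(|g|)+\tfrac12\Phi(|h|)$, and the forced equality means $\Phi\big(\tfrac12|g+h|\big)=\Phi\big(\tfrac12(|g|+|h|)\big)$ a.e.\ on $E$; because $\tfrac12|g+h|<\tfrac12(|g|+|h|)$ strictly on $E$, this forces $\Phi$ to be constant on an interval $[\tfrac12|g(x)+h(x)|,\tfrac12(|g(x)|+|h(x)|)]$ of positive length, i.e.\ $\varphi^+(\tfrac12|g(x)+h(x)|)=0$ for a.e.\ $x\in E$. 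Combined with $\varphi\in\Im$ (so $\varphi(t)>0$ for $t>0$), this means $g(x)+h(x)=0$, i.e.\ $f(x)-P(x)=0$, for a.e.\ $x\in E$; equivalently $P_1(x)-P_2(x)=2(f(x)-P_2(x))=-2(f(x)-P_1(x))$, and on $E$ we also had $f-P_1$ and $f-P_2$ nonzero, a contradiction with $f-P=0$ there. Hence $m(E)=0$.

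A small gap in the above is the case $\varphi^+(0)>0$, where $\Phi$ is strictly increasing from $0$ and the argument is cleaner, versus $\varphi^+(0)=0$, where one must still rule out the possibility that the forced equality is achieved with $f-P_1$ and $f-P_2$ both landing in a flat piece of $\Phi$ away from $0$; but the strict inequality $|g+h|<|g|+|h|$ on $E$ together with monotonicity of $\Phi$ handles this uniformly, as sketched. The continuity hypothesis on $S_n$ (continuity a.e.) is not actually needed for this measure-theoretic argument and is presumably included for later use; I would simply not invoke it, or invoke it only to note that $E$ is (a.e.) the union of open sets on which the sign pattern is locally constant, which can streamline the bookkeeping but is inessential.

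\textbf{Main obstacle.} The one genuinely delicate point is passing from ``$\Phi$ agrees with its chord at a single interior point of the relevant interval'' to ``$\Phi$ is affine on that whole interval'': this is exactly where convexity of $\Phi$ is used (a convex function lying below the chord that touches it at an interior point must coincide with the chord on the entire subinterval), and it is this affine-piece conclusion — not strict convexity, which we are deliberately avoiding — together with $\varphi(t)>0$ for $t>0$ that produces the contradiction. Everything else (the midpoint competitor, the pointwise convexity estimate, dominated convergence / integrability via $\Delta_2$ and boundedness of $\Omega$) is routine.
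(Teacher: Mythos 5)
Your proposal follows essentially the same route as the paper's proof: pass to the midpoint $P=\tfrac12(P_1+P_2)$, deduce that it is also a best approximation and hence that the convexity inequality $\Phi\bigl(\tfrac12|g+h|\bigr)\le\tfrac12\Phi(|g|)+\tfrac12\Phi(|h|)$ must be an equality a.e., and then exploit the strict triangle inequality $|g+h|<|g|+|h|$ on the bad set $E$. The substance is right, but the very last step of your contradiction is misstated. From $\varphi^+\bigl(\tfrac12|g+h|\bigr)=0$ and $\varphi(t)>0$ for $t>0$ you correctly get $g+h=0$ a.e.\ on $E$, but you then declare this ``a contradiction with $f-P=0$ there,'' which it is not: $f=\tfrac12(P_1+P_2)$ with $f-P_1\neq0$ and $f-P_2\neq0$ is perfectly consistent. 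The actual contradiction is already in your hands one line earlier: equality of $\Phi$ at the two points $\tfrac12|g+h|=0$ and $\tfrac12(|g|+|h|)>0$ forces $\Phi\bigl(\tfrac12(|g|+|h|)\bigr)=\Phi(0)=0$, which is impossible because $\varphi(t)>0$ for $t>0$ makes $\Phi$ strictly positive (indeed strictly increasing) away from the origin. This is exactly the paper's shorter argument: on $E$ one has $\Phi\bigl(\tfrac12|g+h|\bigr)<\Phi\bigl(\tfrac12(|g|+|h|)\bigr)\le\tfrac12\Phi(|g|)+\tfrac12\Phi(|h|)$ by strict monotonicity plus convexity, contradicting the forced pointwise equality directly, with no need to discuss flat pieces of $\Phi$ or the chord argument you describe as the ``main obstacle'' (that device belongs to a different lemma in the paper). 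Your remark that the continuity hypothesis on $S_n$ is not needed is consistent with the paper, whose proof likewise does not use it.
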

\begin{proof}
    Let's define the functions $g_1(x):=f(x)-P_1(x)$, $g_2(x):=f(x)-P_2(x)$ and the number $\rho:=\inf_{Q\in S_n}\int_a^b \Phi(|f-Q|)dx$. We observe that $\rho=\int_a^b \Phi(|g_1|)dx=\int_a^b \Phi(|g_2|)dx$.
    Using that $\Phi$ is a non decreasing and convex function, we have: $$\int_a^b\Phi(|f-\frac{P_1+P_2}{2}|dx\leq \frac{1}{2}\int_a^b \Phi(|g_1|)dx + \frac{1}{2}\int_a^b \Phi(|g_2|)dx=\rho.$$
    Then $\frac{P_1+P_2}{2}$ is also a best $\Phi-$approximation for $f$ and $$\int_a^b \Phi\left(\frac{|g_1+g_2|}{2}\right)dx=\int_a^b \frac{\Phi(|g_1|)}{2}dx + \int_a^b \frac{\Phi(|g_2|)}{2}dx.$$ Which means that  
    \begin{equation}\label{eq Phi}
        \Phi\left(\frac{|g_1(x)+g_2(x)|}{2}\right)=\frac{\Phi(|g_1(x)|)}{2}+\frac{\Phi(|g_2(x)|)}{2}
    \end{equation}in almost every $x\in [a,b]$.
    
    That is because $\Phi$ is continuous and $\frac{\Phi(|g_1(x)|)}{2} + \frac{\Phi(|g_2(x)|)}{2} -\Phi\left(\frac{|g_1(x)+g_2(x)|}{2}\right)\geq 0$ in $[a,b]$. If for some set $A$ with positive measure, $g_1(x)$ and $g_2(x)$ have different signs, then $|g_1(x)+g_2(x)|<|g_1(x)|+|g_2(x)|$ in $A$, and since $\Phi$ is a increasing and convex function, it satisfies $\Phi(\frac{|g_1(x)+g_2(x)|}{2})< \Phi(\frac{|g_1(x)|+|g_2(x)|}{2})\leq \frac{\Phi(|g_1(x)|+\Phi(|g_2(x)|)}{2}$. But this last inequality contradicts (\ref{eq Phi}), and the proof is then complete.
    
\end{proof}


Recall that a function $\Phi:[0,\infty)\to\R$ is {\em convex} in an interval $I$ if for all $x,y\in I$ and $0\leq \lambda\leq 1$, then  $\Phi(\lambda x+(1-\lambda)y)\leq \lambda \Phi(x)+(1-\lambda)\Phi(y)$. And  $\Phi$ is a {\em strictly convex} function in $I$ if for all $x\not=y\in I$ and $0< \lambda < 1$, then  $\Phi(\lambda x+(1-\lambda)y)<\lambda \Phi(x)+(1-\lambda)\Phi(y)$. We observe that linear functions are convex, but not strictly convex.
On the other hand, the following lemma shows that a convex, but not strictly convex function, is linear  in some sub interval. 

\begin{lem}\label{lema no estrict conv}
    Let $\Phi\in \Psi$. If $\Phi$ is not strictly convex in an interval $I$, then there exists an interval $J\subset I$ such that $\Phi$ is a straight line in $J$.
\end{lem}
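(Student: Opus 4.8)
The plan is to exploit a classical fact about convex functions: if the convexity inequality holds with equality at one interior point of a segment, then the function coincides with the chord on that entire segment.

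First I would unpack the hypothesis. Since $\Phi$ is not strictly convex in $I$, there are points $x,y\in I$, which I may assume satisfy $x<y$, and a scalar $\lambda\in(0,1)$, such that, setting $z:=\lambda x+(1-\lambda)y\in(x,y)$, we have $\Phi(z)=\lambda\Phi(x)+(1-\lambda)\Phi(y)$. I then take $J:=[x,y]$, which lies in $I$ because $I$ is an interval, and introduce the affine function $L$ determined by $L(x)=\Phi(x)$ and $L(y)=\Phi(y)$; note that $L(z)=\lambda\Phi(x)+(1-\lambda)\Phi(y)=\Phi(z)$, and that $\Phi\leq L$ throughout $J$ by convexity.

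The heart of the argument, and the only place where some care is needed, is to upgrade the single equality $\Phi(z)=L(z)$ to $\Phi\equiv L$ on all of $J$. Suppose instead that $\Phi(w_0)<L(w_0)$ for some $w_0\in(x,y)$; by symmetry assume $w_0<z$, and write $z=\mu w_0+(1-\mu)y$ with $\mu\in(0,1)$. Using the convexity of $\Phi$ and the affineness of $L$ (so that $L(y)=\Phi(y)$),
\[
\Phi(z)\leq \mu\,\Phi(w_0)+(1-\mu)\,\Phi(y)<\mu\,L(w_0)+(1-\mu)\,L(y)=L(z),
\]
which contradicts $\Phi(z)=L(z)$. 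The case $w_0>z$ is entirely analogous, writing $z$ as a convex combination of $x$ and $w_0$. Hence $\Phi=L$ on $J$, i.e.\ $\Phi$ is a straight line on $J$, which is the assertion; equivalently, $\varphi$ is constant on the interior of $J$.

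I do not expect any genuine obstacle: the proof uses only the convexity of $\Phi$ (the specific representation $\Phi(x)=\int_0^x\varphi(t)\,dt$ is not needed), and the single subtlety is simply to invoke the convexity inequality a second time in order to rule out a strict gap between $\Phi$ and the chord at any other interior point of $J$.
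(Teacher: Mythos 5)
Your proof is correct and follows essentially the same route as the paper's: both arguments take the chord through the endpoints, use convexity to get $\Phi\leq L$ on the segment, and then apply the convexity inequality a second time (you via the convex combination $z=\mu w_0+(1-\mu)y$, the paper via a slope comparison) to show that a strict dip below the chord at any interior point would force $\Phi(z)<L(z)$, contradicting the assumed equality. Your phrasing with the explicit affine function $L$ is slightly cleaner, but the underlying argument is identical.
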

\begin{proof}
       If $\Phi$ is not strictly convex, then there exists an interval $J=(x_1,x_3)\subset I$ and a convex combination of $x_1$ and $x_3$ called $x_2$, such that the point $(x_2,\Phi(x_2))$ is in the line between the points $(x_1,\Phi(x_1))$ and $(x_3, \Phi(x_3))$, where the equation for this line is  
       %
       \begin{equation}\label{eq de la cuerda cuerda}
           \Phi(x)=\frac{\Phi(x_3)-\Phi(x_2)}{x_3-x_2}(x-x_3)+\Phi(x_3).
       \end{equation}

       If there is an $a\in (x_1,x_2)$ such that  
       $\Phi(a)< \frac{\Phi(x_3)-\Phi(x_2)}{x_3-x_2}(a-x_3)+\Phi(x_3) $, then  $\frac{\Phi(x_3)-\Phi(a)}{x_3-a}>\frac{\Phi(x_3)-\Phi(x_2)}{x_3-x_2}$, and  we have that $\Phi(x_2)>\frac{\Phi(x_3)-\Phi(a)}{x_3-a}(x_2-x_3)+\Phi(x_3)$. This inequality means that $x_2$ is a convex combination between $a$ and $x_3$ such that $\Phi(x_2)$ is not in the segment between the points $(a,\Phi(a))$ and $(x_3, \Phi(x_3))$; which contradicts the convexity of $\Phi$. A similar argument is used if $a\in (x_2,x_3)$. So (\ref{eq de la cuerda cuerda}) is satisfied for all $x$ in $J$.
\end{proof}

\section{Uniqueness Results}

Next we give  uniqueness results of best $\Phi-$ approximation, for a suitable convex function $\Phi,$ which is non strictly convex and, according to Lemma \ref{lema no estrict conv}, it is a straight line in some interval. In each case a convenient approximation class is considered.

\begin{thm} 
    Let $\Phi\in \Psi,$ $S_n\subseteq C([a,b])$ be a Tchebycheff space in $[a,b]$ and $f$ be a continuous function in $[a,b]$. Then there exists a unique best $\Phi$-approximation of $f$ in $[a,b]$ from the class $S_n$.
\end{thm}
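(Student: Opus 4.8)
The existence of a best $\Phi$-approximation is already granted by Theorem 2.3 in \cite{BFL}, so the entire effort goes into uniqueness. The plan is to argue by contradiction: suppose $P_1, P_2 \in \mu_\Phi(f/S_n)$ with $P_1 \neq P_2$, and derive a contradiction with the Tchebycheff property of $S_n$. The key input is Lemma \ref{lema 2 mejores app}, which tells us that $(f-P_1)(f-P_2) \geq 0$ on $[a,b]$; equivalently, at every point $x$ the two errors $f(x)-P_1(x)$ and $f(x)-P_2(x)$ have the same sign (one possibly being zero). I would then look at the nonzero element $h := P_1 - P_2 \in S_n$, which by the Tchebycheff hypothesis has at most $n-1$ zeros in $[a,b]$.

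The heart of the matter is to show that $h$ must in fact vanish at \emph{many} points — enough to force $h \equiv 0$ by the Tchebycheff property. First I would dispose of the trivial case $f = P_1 = P_2$ on a set of full measure, so assume $f \neq P_i$ somewhere. The idea is that wherever both errors are strictly positive (or both strictly negative), $h = (f - P_2) - (f - P_1)$ can be of either sign, so that by itself is not enough; the sign information alone does not pin down zeros of $h$. So the argument must use the characterization Theorem \ref{thm: caracterización} (or Remark \ref{remark caracterizacion}) applied to $P_1$ and to $P_2$ with the test function $Q = h$, combined with $\frac{P_1+P_2}{2}$ also being a best approximation (established inside the proof of Lemma \ref{lema 2 mejores app}). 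Writing down the characterization inequality for all three of $P_1$, $P_2$ and their midpoint, and exploiting that by Lemma \ref{lema 2 mejores app} the sets $\{f > P_1\}$, $\{f > P_2\}$, and $\{f > \tfrac{P_1+P_2}{2}\}$ agree up to null sets (and similarly for $<$ and $=$), should force the relevant integrals to cancel and yield that $\varphi^-(|f-P_1|) = \varphi^+(|f-P_2|)$ a.e. on the region where the errors are positive, etc. The continuity of $f$, $P_1$, $P_2$ then upgrades "almost everywhere" statements to statements about honest zeros of $h$: on any interval where $h$ does not vanish, $f - P_1$ and $f - P_2$ are continuous functions of constant (common) sign, and a careful count of sign changes of $h$ against the structure forced by the characterization should exceed $n-1$.

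The main obstacle, I expect, is precisely this last bridge: translating the measure-theoretic and variational identities into a zero-count for $h$. Unlike the classical $L^1$ case (where $\varphi \equiv 1$ and the characterization reduces to a clean sign condition $\int \operatorname{sgn}(f-P)Q = 0$ off the zero set, as in \cite{Rice}), here $\varphi$ may be non-constant and may have a flat piece (Lemma \ref{lema no estrict conv}), and $\varphi^+(0)$ may be positive, so the set $\{f = P\}$ genuinely contributes. I would handle $\varphi^+(0) > 0$ by the same device used elsewhere in the paper — absorbing the $\{f=P\}$ term — and handle a possible linear stretch of $\Phi$ by noting that on $[a,b]$, continuity plus the Tchebycheff property limits how often the error can enter that stretch. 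Concretely: if $h$ has its maximal allowed $n-1$ zeros $a \le t_1 < \dots < t_{n-1} \le b$, then on each of the $n$ open subintervals between consecutive $t_i$ (and the end pieces), $h$ has fixed sign, hence $P_1 > P_2$ or $P_1 < P_2$ throughout; combined with $(f-P_1)(f-P_2)\ge 0$ this sandwiches $f$ between $P_1$ and $P_2$ and pins the sign of each error on that subinterval. Feeding this sign pattern into the characterization identity for $h = P_1 - P_2$ produces an integral of a strictly positive quantity that cannot vanish, contradicting $F_h^+(0) = 0$ for $P_1$ (or $P_2$). This contradiction forces $h \equiv 0$, i.e. $P_1 = P_2$, completing the proof.
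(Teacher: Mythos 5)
Your overall strategy (existence from \cite{BFL}, then uniqueness by forcing $h=P_1-P_2$ to vanish too often) is the right one, and you correctly identify Lemma \ref{lema 2 mejores app} and Theorem \ref{thm: caracterización} as the key inputs. But the crucial final step is a genuine gap. Giving $h$ its maximal $n-1$ zeros and pinning the common sign of $f-P_1$ and $f-P_2$ on each of the resulting subintervals (note: Lemma \ref{lema 2 mejores app} puts $f$ \emph{outside} the band between $P_1$ and $P_2$, not ``sandwiched between'' them, though your conclusion that the error signs are constant per subinterval survives by continuity), you then claim that the characterization applied with $Q=h$ yields ``an integral of a strictly positive quantity that cannot vanish.'' It does not. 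Writing $u=f-P_1$, $v=f-P_2=u+h$ and $\tilde\varphi(t)=\varphi(|t|)\,sgn(t)$, combining the characterizations at $P_1$ and at $P_2$ gives control of $\int [\tilde\varphi(v)-\tilde\varphi(u)]\,h\,dx$, whose integrand is only \emph{nonnegative} (a product of increments of the nondecreasing function $\tilde\varphi$) and vanishes identically wherever $\varphi$ is constant between $|u|$ and $|v|$ --- in particular everywhere off $\{f=P_1\}\cup\{f=P_2\}$ for $\Phi(x)=x$, which belongs to $\Psi$. Likewise the single inequality for $Q=h$ gives nothing, because $sgn(f-P_1)h$ is positive on some subintervals and negative on others (on an interval with $h>0$ it is $\geq 0$ if $f\geq P_1$ there but $<0$ if $f\leq P_2$ there), so no sign-definite integral appears. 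This is precisely why uniqueness in $L^1$ is not a soft variational fact; your argument would ``prove'' uniqueness without ever using that a nonzero element of $S_n$ with prescribed sign changes exists. You also never actually handle the case $m(\{f=P_1\})>0$, where the slack term $\varphi^{+}(0)\int_{\{f=P\}}|Q|\,dx$ is genuinely present; ``absorbing it by the same device used elsewhere'' is not an argument.

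The paper's proof runs the count the other way. When $m(Z(f-P_1))=0$, it tests the characterization against an element $P_m\in S_n$ built (via the Tchebycheff structure, Proposition 2 in \cite{PinkusLibro}) to match the sign pattern of $f-P_1$; if $f-P_1$ changed sign fewer than $n$ times this would force $f=P_1$ a.e., so $sgn(f-P_1)$ has at least $n$ sign changes. Each sign-change point is, by continuity and Lemma \ref{lema 2 mejores app}, a common zero of $f-P_1$ and $f-P_2$, hence a zero of $P_1-P_2$, contradicting the Tchebycheff bound of $n-1$ zeros. The case where every best approximation has a zero set of positive measure is treated separately, using convexity of $\mu_\Phi(f/S_n)$ and Rice's lemma on intersecting zero sets. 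To repair your proof you would need to count sign changes of $f-P_1$, not zeros of $h$, and add the positive-measure case.
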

\begin{proof}
    Since the approximation class is finite dimensional, the existence of a best approximation, say $P_1$, is assured by theorem 2.3 in \cite{BFL}.  It remains to prove the uniqueness of the best $\Phi-$ approximation function. Suppose that $f\not \in S_n$ and for each $Q\in S_n$, consider the set $Z(f-Q):=\set{x\in [a,b] ~:~ f(x)-Q(x)=0}$. 
    
    Suppose that $Z(f-P_1)$  has measure zero, then by Theorem \ref{thm: caracterización} we conclude \begin{align*}\int_{\{Q>0,~ f>P_1\}}\varphi^{-}(|f-P_1|) sgn(f-P_{1})Q \,dx + \int_{\{Q<0,~ f<P_1\}}\varphi^{-}(|f-P_1|) sgn(f-P_{1})Q \,dx \\ +  \int_{\{Q<0, ~ f>P_1 \}} \varphi^{+}(|f-P_1|) sgn(f-P_{1})Q \,dx + \int_{\{Q>0, ~ f<P_1 \}} \varphi^{+}(|f-P_1|) sgn(f-P_{1})Q \,dx \\ \leq 0 \end{align*} for every $Q\in S_{n}$.   
    
    Note that $S_n$ has a basis $\set{\delta_i}_{i=1}^n$ such that for $m<n$, the set  $\set{\delta_i}_{i=1}^{m}$ generate also a Tchebycheff space (see Theorem 2.29 in \cite{Schumaker}). 
    
    We get that the function $sign[f(x)-P_1(x)]$ has at least $n$ sign changes. In fact, suppose it has  only $m<n$ sign changes at the $n$ points $x_1<...<x_m$ in $(a,b)$.   Then 
    there exists a  function $P_m\in S_n,\,\, P_m\neq 0$  that changes sign only  in  $x_i$, for $i=1,...,m$ (see Proposition 2, page 195, in \cite{PinkusLibro}). It follows that the function $P_m sign[f-P_1] \ge 0$ in $[a,b].$

 Since $\varphi^{+}$ and $\varphi^{-}$ are positive for $x>0$, we have \begin{align*}0=\int_{\{P_{m}>0,~ f>P_1\}}\varphi^{-}(|f-P_1|) sgn(f-P_{1})P_{m} \,dx \\ = \int_{\{P_{m}<0,~ f<P_1\}}\varphi^{-}(|f-P_1|) sgn(f-P_{1})P_{m} \,dx \\ =  \int_{\{P_{m}<0, ~ f>P_1 \}} \varphi^{+}(|f-P_1|) sgn(f-P_{1})P_{m} \,dx \\ = \int_{\{P_{m}>0, ~ f<P_1 \}} \varphi^{+}(|f-P_1|) sgn(f-P_{1})P_{m} \,dx \end{align*}

Thus we conclude  $f=P_1$ almost everywhere in $[a,b],$ we get a contradiction. 
 
 Then, the function $sgn[f(x)-P_1(x)]$, has at least $n$ sign changes.
    Now, suppose that there exists $P_2$, another best $\Phi$-approximation for $f$ in $[a,b]$. By Lemma \ref{lema 2 mejores app}, $f(x)-P_2(x)$ is zero at the $n-$points where $f(x)-P_1(x)$ changes signs. Then $P_1(x)-P_2(x)$ has $n-$zeros, so $P_1=P_2$.

    Now, suppose that for any $P_1\in \mu_{\Phi}(f/S_n)$ we have $m(Z(f-P_1)) >0$ and consider another $P_2\in \mu_{\Phi}(f/S_n)$. By the convexity of $\Phi$, we have for $\lambda\in[0,1]$: $$\int_{[a,b]} \Phi(|f-\lambda P_1-(1-\lambda)P_2|)dx\leq \lambda\int_{[a,b]} \Phi(|f-P_1|)dx+(1-\lambda)\int_{[a,b]}\Phi(|f-P_2|)dx$$ then $\lambda P_1 + (1-\lambda)P_2 \in \mu_{\Phi}(f/S_n)$. Suppose that for each $\lambda\in [0,1]$: $m(Z(f-\lambda P_1 - (1-\lambda)P_2))>0$. Then there exists $\lambda_1,~ \lambda_2\in [0,1]$ such that the sets of zeros $Z(f-\lambda_1 P_1 - (1-\lambda_1)P_2)$ and $ Z(f-\lambda_2 P_1 - (1-\lambda_2)P_2)$ have intersection with positive measure (see Lemma 4-7, page 109 in \cite{Rice}). In particular, the intersection has $n-$points. Then $P_1=P_2$ and this concludes the proof.
    \end{proof}
    Next we consider  wider approximation classes: the {\em 1-space} and the {\em 0-space}.
    \begin{defi} 
    We say that the n-dimensional set $S_n\subset C([a,b])$  is a {\em 1-space} if there exists $h\in S_n$ such that $h(x) > 0$ for every $x\in [a,b],$
    and if $P_1, P_2 \in S_n$ with $P_1 \neq P_2$ then $|\{P_1 = P_2\}| =0.$ If $S_n$ satisfies only the last condition, we say that $S_n$ is a {\em 0-space}.
    \end{defi}

    \begin{lem}\label{lema14}
        Let $S_n$ a 1-space and $P\in\mu_{\Phi}(f/S_n).$ Then there exits $x\in [a,b]$ such that $f(x) = P(x).$
    \end{lem}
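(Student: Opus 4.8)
The plan is to argue by contradiction: assume that $f(x)\neq P(x)$ for every $x\in[a,b]$. (Here $f$ is understood to be continuous on $[a,b]$, as in the preceding theorem; this hypothesis is essential, since for a merely measurable $f$ the best approximation need not meet $f$ anywhere.) Since $f-P$ is continuous and never vanishes on the connected set $[a,b]$, it has constant sign there; replacing, if necessary, $h$ by $-h$, we may assume $f(x)>P(x)$ for all $x\in[a,b]$.

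The next step is to perturb $P$ in the direction of the strictly positive element $h$. By compactness and continuity, $\delta:=\min_{[a,b]}(f-P)>0$ and $M:=\max_{[a,b]}h$ is finite and positive. For any $t$ with $0<t<\delta/M$ the function $P+th$ belongs to $S_n$, and for every $x\in[a,b]$ we have $0<(f-P)(x)-t\,h(x)<(f-P)(x)$; hence $|f-(P+th)|=(f-P)-t\,h$ is everywhere strictly between $0$ and $|f-P|$.

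Now I would invoke the fact that $\Phi$ is strictly increasing on $[0,\infty)$, which follows at once from $\Phi(y)-\Phi(x)=\int_x^y\varphi(t)\,dt>0$ for $0\le x<y$, since $\varphi\in\Im$ is positive on $(0,\infty)$. Therefore $\Phi\big(|f-(P+th)|(x)\big)<\Phi\big(|f-P|(x)\big)$ for every $x\in[a,b]$, and integrating this strict pointwise inequality yields $\int_a^b\Phi(|f-(P+th)|)\,dx<\int_a^b\Phi(|f-P|)\,dx=\inf_{Q\in S_n}\int_a^b\Phi(|f-Q|)\,dx$, contradicting $P+th\in S_n$. Hence there is some $x\in[a,b]$ with $f(x)=P(x)$.

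I do not anticipate a real obstacle here: the computation is an elementary one-parameter perturbation, and the two points to be careful about are (i) the constant-sign dichotomy for $f-P$, for which one needs continuity of $f$ and connectedness of $[a,b]$, and (ii) obtaining a \emph{strict} decrease of the functional, which is exactly where strict monotonicity of $\Phi$ (rather than strict convexity) is used. Note also that only the existence of a strictly positive element of $S_n$ enters the argument; the remaining part of the definition of a $1$-space is not needed for this lemma. As an alternative one may instead feed $Q=h$ and $Q=-h$ into Theorem~\ref{thm: caracterización}: when $|\{f=P\}|=0$ this gives $\int_{\{f>P\}}\varphi^{-}(|f-P|)h\,dx\le\int_{\{f<P\}}\varphi^{+}(|f-P|)h\,dx$ together with the same inequality after interchanging $\{f>P\}$ and $\{f<P\}$, and continuity of $f$ makes one of these two sets empty, so the nonempty one forces $\int\varphi^{-}(|f-P|)h\,dx\le 0$, which is impossible because the integrand is strictly positive there.
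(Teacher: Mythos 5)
Your proof is correct and is essentially the paper's own argument: both perturb $P$ by a small multiple of the strictly positive element $h$ in the direction of $f-P$ and derive a strict decrease of the functional from the strict monotonicity of $\Phi$. You merely make explicit two steps the paper leaves implicit (the constant-sign dichotomy for the continuous function $f-P$ and the justification that $\Phi$ is strictly increasing), so no further comparison is needed.
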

    \begin{proof}
    We assume $P(x) - f(x) > 0,$ for every $x\in [a,b]$ and consider $h\in S_n, \,\, h(x)> 0$ for $x\in [a,b].$ Then, for $\varepsilon >0$ such that $\varepsilon < \frac{min_{x\in [a,b]} P(x) - f(x)}{\max_{x\in [a,b]} h(x)},$ we have $$ \varepsilon h(x) < P(x) - f(x), \,\, x\in [a,b].$$
    Then $$0 < \int_{[a,b]} \Phi (P - \varepsilon h -f)\, dx < \int_{[a,b]} \Phi(P-f) \, dx,$$
    which is a contradiction since $P - \varepsilon h \in S_n$
    \end{proof}
    Now we get the following uniqueness result when $S_n$ is a 1-space.
    \begin{thm}
     Let $\varphi \in \Im$ which is also a strictly increasing function in $[0,b], \, b>0,$ and $\Phi (x) = \int_0^x \varphi (t) \, dt.$ If $S_n$ is a 1-space, then for every $f\in C([a,b])$ the best approximation set $\mu_{\Phi} (f/S_n)$ is a singleton. 
        \end{thm}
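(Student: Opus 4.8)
The plan is to derive the statement from Lemma~\ref{lema 2 mejores app}, Lemma~\ref{lema14} and the strict convexity of $\Phi$ on $[0,b]$ forced by the hypothesis on $\varphi$. Existence of a best $\Phi$-approximation $P_1$ is guaranteed by Theorem~2.3 in \cite{BFL}, so only uniqueness has to be shown, and I would argue by contradiction. Suppose $P_2\in\mu_{\Phi}(f/S_n)$ with $P_2\neq P_1$, and set $g_i:=f-P_i$ and $\rho:=\inf_{Q\in S_n}\int_a^b\Phi(|f-Q|)\,dx=\int_a^b\Phi(|g_i|)\,dx$. By convexity of $\Phi$ the midpoint $\frac{P_1+P_2}{2}\in S_n$ is again a best approximation, so integrating the pointwise chain $\Phi\left(\frac{|g_1+g_2|}{2}\right)\le\Phi\left(\frac{|g_1|+|g_2|}{2}\right)\le\frac12\Phi(|g_1|)+\frac12\Phi(|g_2|)$ and comparing with $\rho$ forces equality a.e.\ at both steps. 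Since $\varphi>0$ on $(0,\infty)$, $\Phi$ is strictly increasing, so the first equality yields $|g_1+g_2|=|g_1|+|g_2|$, i.e.\ $g_1g_2\ge 0$ a.e.\ (this is Lemma~\ref{lema 2 mejores app}), while the second yields $\Phi\left(\frac{|g_1|+|g_2|}{2}\right)=\frac12\Phi(|g_1|)+\frac12\Phi(|g_2|)$ a.e.

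Next I would use that $S_n$ is a $1$-space. First, $P_1\neq P_2$ gives $|\{P_1=P_2\}|=0$, hence for a.e.\ $x$ the numbers $g_1(x),g_2(x)$ have the same sign and are distinct, so $|g_1(x)|\neq|g_2(x)|$. Since $\varphi$ is strictly increasing on $[0,b]$, a direct computation with $\Phi(x)=\int_0^x\varphi$ shows that $\Phi$ is strictly convex on $[0,b]$, hence affine on no nondegenerate subinterval of $[0,b]$. Now fix such an $x$: the equality $\Phi\left(\frac{|g_1(x)|+|g_2(x)|}{2}\right)=\frac12\Phi(|g_1(x)|)+\frac12\Phi(|g_2(x)|)$ is equality in midpoint convexity for the convex function $\Phi$, so $\Phi$ is affine on the nondegenerate closed interval with endpoints $|g_1(x)|$ and $|g_2(x)|$; were $\min(|g_1(x)|,|g_2(x)|)<b$, this interval would meet $[0,b]$ in a nondegenerate subinterval on which $\Phi$ is affine, contradicting strict convexity there. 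Thus $\min(|g_1(x)|,|g_2(x)|)\ge b$ a.e., and in particular $|f-P_1|\ge b$ a.e.\ on $[a,b]$. On the other hand, Lemma~\ref{lema14} applies (here the strictly positive element $h\in S_n$ from the definition of a $1$-space is used) and provides $x_0\in[a,b]$ with $f(x_0)=P_1(x_0)$; since $g_1=f-P_1$ is continuous and $b>0$, the set $\{|g_1|<b\}$ is a relatively open neighbourhood of $x_0$ in $[a,b]$, hence of positive measure, contradicting $|f-P_1|\ge b$ a.e. Therefore $P_1=P_2$, and $\mu_{\Phi}(f/S_n)$ is a singleton.

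I expect the delicate point to be this last combination: extracting the quantitative bound $|f-P_1|\ge b$ a.e.\ from the merely local hypothesis ``$\varphi$ strictly increasing on $[0,b]$'' through the equality case of midpoint convexity, and making sure that the several null exceptional sets (from Lemma~\ref{lema 2 mejores app} and from the $1$-space condition) do not interfere with the contradiction produced by the single, continuous contact point between $f$ and $P_1$ furnished by Lemma~\ref{lema14}. Note that a general $0$-space would not suffice for this argument, precisely because it need not guarantee such a contact point.
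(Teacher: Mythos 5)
Your proof is correct and uses essentially the same ingredients and strategy as the paper: Lemma~\ref{lema 2 mejores app} (same sign a.e.), Lemma~\ref{lema14} (a contact point, hence by continuity a set of positive measure where $|f-P_1|<b$), the $0$-space part of the definition to get $|g_1|\neq|g_2|$ a.e., and strict convexity of $\Phi$ on $[0,b]$. The only difference is organizational — the paper derives a strict integral inequality on a positive-measure set directly, while you run the equality case of convexity pointwise to conclude $|f-P_1|\ge b$ a.e.\ and then contradict the contact point — which is the same argument arranged contrapositively.
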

        \begin{proof}
            Suppose $P_1, P_2 \in \mu_{\Phi} (f/{S_n}),$ with $P_1 \neq P_2.$ Now by Lemma \ref{lema14} there exists an interval $I$ such that $|f-P_1|(x) \le b,$ for every $x\in I, $ and then the set $J= I \cap \{x\in [a,b] : P_1 (x) \neq P_2 (x)\}$ has positive measure. Then, from the strictly convexity of $\Phi$ in $[0, b]$ and taking into account Lemma \ref{lema 2 mejores app} we get
            $$\int_{J} \Phi (|f-(\frac{P_1 + P_2}{2})|) \, dx< \frac{1}{2} \int_{J} \Phi(|f-P_1|)\, dx + \frac{1}{2} \int_{J} \Phi(|f-P_2|)\, dx$$ and then, since the convexity of $\Phi,$  we obtain
            $$\int_{[a,b]} \Phi (|f-(\frac{P_1 + P_2}{2})|) \, dx< \frac{1}{2} \int_{[a,b]} \Phi(|f-P_1|)\, dx + \frac{1}{2} \int_{[a,b]} \Phi(|f-P_2|)\, dx,$$
            which is a contradiction since $\frac{P_1 + P_2}{2} \in S_n.$
         \end{proof}
        
    For the next result we consider a n-dimensional 0-space $S_n \subseteq C([a,b])$ and we give a necessary and sufficient condition  on the approximation class $S_n$ which assures the uniqueness of $\mu_{\Phi} (f/S_n)$ for convex function $\Phi,$
    which is a strictly convex function just for $x$ grater than some positive real number. This result follows the same lines considered in \cite{CW} for  $L^1$  and it is a generalization  to Orlicz spaces. 

     \begin{defi}\label{betaset} 
       The set $Z(f):=\{x\in [a,b]: f(x)=0\}$ is called a $\gamma-$set if  $0\in\mu_{\Phi}(f/S_n)$.  
    \end{defi}

   \begin{thm}\label{unicitybeta}
   %
   
   Consider the function $\Phi$ such that for $k, c >0$ and $x\in [0,c]$,  $\Phi(x) = k x,$ and $\Phi$ is a strictly convex function for $x\in (c, \infty)$. Let $S_n$ be a n-dimensional 0-space. Then, for every $f\in C([a,b]) \cap L^{\Phi} ([a,b])$ the set $\mu_{\Phi} (f/S_n)$ has an unique element $P_1,$  if and only if
   \begin{enumerate}[a)]
   \item The constant function $0$ is the unique element of $S_n$ which is $0$ on a $\gamma-$ set \\   
   or 
   
   \item For $P_1 \in \mu_{\Phi} (f/S_n), $ the set $\{x\in [a,b] : |f(x)-P_1(x)| > c\}$ has positive measure for every  $f\in C([a,b]) \cap L^{\Phi} ([a,b]).$
   \end{enumerate}
   \end{thm}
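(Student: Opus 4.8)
The plan is to prove the two directions separately, exploiting the structure of $\Phi$: it is linear (hence not strictly convex) precisely on $[0,c]$ and strictly convex on $(c,\infty)$. The key mechanism, as in the Tchebycheff and 1-space results above, is that strict convexity of $\Phi$ on the set where $|f-P_1|>c$ forces the midpoint $\tfrac{P_1+P_2}{2}$ of two best approximations to be strictly better unless $P_1=P_2$ on that set, while the 0-space property $|\{P_1=P_2\}|=0$ turns ``$P_1=P_2$ on a positive-measure set'' into ``$P_1=P_2$ everywhere.''

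For the ``if'' direction, suppose (a) or (b) holds and let $P_1,P_2\in\mu_\Phi(f/S_n)$ with $P_1\neq P_2$. As in the previous theorems, convexity of $\Phi$ makes $\tfrac{P_1+P_2}{2}$ a best approximation and, by Lemma \ref{lema 2 mejores app}, $(f-P_1)(f-P_2)\geq 0$ a.e., so $|f-\tfrac{P_1+P_2}{2}|=\tfrac{|f-P_1|+|f-P_2|}{2}$ a.e. Under hypothesis (b), on the positive-measure set $E:=\{|f-P_1|>c\}$ one has $|f-P_1|>c$ on $E$; combining $(f-P_1)(f-P_2)\ge0$ with $P_1\ne P_2$ (so $|\{P_1=P_2\}|=0$) gives a positive-measure subset of $E$ where $|f-P_1|$ and $|f-P_2|$ are distinct and both $>c$ after shrinking, whence strict convexity of $\Phi$ on $(c,\infty)$ yields $\Phi(\tfrac{|f-P_1|+|f-P_2|}{2})<\tfrac{\Phi(|f-P_1|)+\Phi(|f-P_2|)}{2}$ there, contradicting that the midpoint is a best approximation. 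Under hypothesis (a), the argument is different: one shows that if two distinct best approximations exist then their difference $P_1-P_2\in S_n$ is nonzero yet vanishes on a $\gamma$-set. The natural candidate is $Z(f-P_1)\cap Z(f-P_2)$; one must verify this set is a $\gamma$-set, i.e. $0\in\mu_\Phi((f-P_1)\chi\text{-something}/S_n)$ or more precisely that the zero set of $f-P_1$ itself is a $\gamma$-set because $P_1$ is best for $f$ — then $P_1-P_2$ is a nonzero element of $S_n$ vanishing on a $\gamma$-set, contradicting (a).

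For the ``only if'' direction, assume $\mu_\Phi(f/S_n)$ is a singleton for every admissible $f$, and suppose both (a) and (b) fail. Failure of (a) gives a nonzero $g\in S_n$ vanishing on some $\gamma$-set $Z(f_0)$ with $0\in\mu_\Phi(f_0/S_n)$. Failure of (b) gives some $f_1$ and a best approximation $P_1$ with $\{|f_1-P_1|>c\}$ of measure zero, i.e. $|f_1-P_1|\le c$ a.e., so on essentially all of $[a,b]$ the function $\Phi$ is applied on its linear piece $[0,c]$. The idea is then to manufacture two distinct best approximations: starting from $P_1$, perturb by a suitable multiple of the element $g$ (rescaled/translated so that the relevant values stay within $[0,c]$), using linearity of $\Phi$ there together with the characterization Theorem \ref{thm: caracterización} (or Remark \ref{remark caracterizacion}) to check that $P_1+tg$ remains a best approximation for small $t$, contradicting uniqueness. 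The main obstacle I expect is exactly this construction in the ``only if'' part: one must carefully combine the defect coming from the $\gamma$-set (failure of (a)) with the ``everything lands in the linear region'' situation (failure of (b)) to produce a genuine second best approximation, handling the sign-change/vanishing set bookkeeping in the characterization inequality, and making sure the perturbation stays inside $[0,c]$ where no strict-convexity penalty is paid; the ``if'' direction, by contrast, is a routine adaptation of the midpoint argument already used twice above.
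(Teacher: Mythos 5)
Your ``if'' direction follows the paper's route and is essentially sound, but it leaves one step unverified in case a): you need to know that the set on which $P_1-P_2$ vanishes is actually a $\gamma$-set. The paper closes this by working with the midpoint rather than with $Z(f-P_1)\cap Z(f-P_2)$ directly: since $\frac{P_1+P_2}{2}$ is itself a best approximation, $Z(f-\frac{P_1+P_2}{2})$ is a $\gamma$-set by definition, and the pointwise equality $\Phi(|f-\frac{P_1+P_2}{2}|)=\frac12\Phi(|f-P_1|)+\frac12\Phi(|f-P_2|)$ forces $f=P_1=P_2$ on that set (because $\Phi(t)>0$ for $t>0$), so the nonzero element $P_1-P_2$ of $S_n$ vanishes on a $\gamma$-set, contradicting a). That is exactly the verification you defer, and it is easy, so this half is recoverable. (In case b) you do not need both $|f-P_1|$ and $|f-P_2|$ to exceed $c$; strict convexity of $\Phi$ beyond $c$ already gives the strict midpoint inequality once the larger of the two exceeds $c$ and they differ.)

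The genuine gap is in the ``only if'' direction, which you correctly flag as the main obstacle but do not overcome. Your plan --- show that $P_1+tg$ remains a best approximation of the \emph{original} $f$ for small $t$ --- does not work as stated: $|f-P_1|$ may be positive but arbitrarily small on sets of positive measure outside $Z(f-P_1)$, so no fixed $t>0$ prevents $f-P_1-tg$ from changing sign relative to $f-P_1$ there, and where the sign flips one only gets $|f-P_1-tg|\ge |f-P_1|-t\,g\, sgn(f-P_1)$, which is the wrong direction for the upper bound you need. The paper's resolution is to abandon $f$ and manufacture a new target: pick a nonzero $P_3\in S_n$ with $Z(f-P_1)\subseteq Z(P_3)$ and $|P_3|\le c/2$, and set $h=|P_3|\, sgn(f-P_1)$. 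Then $sgn(h-\varepsilon P_3)=sgn(f-P_1)$ wherever $h-\varepsilon P_3\neq 0$ for $0<\varepsilon<1$, all relevant values stay in the linear range $[0,c]$ where $\varphi\equiv k$, the characterization of Remark \ref{remark caracterizacion} transfers from the pair $(f,P_1)$ to $(h,0)$ because $\varphi(|h|)=\varphi(|f-P_1|)=k$, and a direct computation shows that both $0$ and $\varepsilon P_3$ lie in $\mu_{\Phi}(h/S_n)$, violating uniqueness. Without this (or an equivalent) construction of an auxiliary function whose residual has the same sign pattern but magnitude controlled by $|P_3|$, the ``only if'' half is not proved.
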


   \begin{proof}
   Suppose $P_1, P_2$ are two different elements in $\mu_{\Phi} (f/S_n).$ Due to the convexity of $\Phi$, we have 
   $$\int_{[a,b]} \Phi (|f- \frac{P_1 + P_2}{2} |) \, dx = \frac{1}{2} \int_{[a,b]} \Phi (|f- P_1 |)\, dx +  \frac{1}{2} \int_{[a,b]} \Phi (|f- P_2 |)\, dx.$$
   Then we get 
   $$ \Phi (|f(x)- \frac{P_1 + P_2}{2} (x)|)  = \frac{1}{2} \Phi (|f(x)- P_1 (x)|) +  \frac{1}{2}  \Phi (|f(x)- P_2 (x)|),\,\,\,  x\in [a,b].$$
   Thus for any $x\in Z(f-\frac{P_1 +P_2}{2})$ we have $f(x)- P_1 (x) = f(x)- P_2(x),$ and then $P_1 (x) - P_2 (x) = 0, $ and this is a contradiction of a).

   Now assuming condition b) and suppose  $P_1,\, P_2 \in \mu_{\Phi} (f/S_n),\,\, P_2 \neq P_1,$ then we have $m(I)>0,$ for 
   $$I =\{x\in [a,b] : |f(x)-P_1 (x)|> c\} \cap \{x\in [a,b] : P_1 (x) \neq P_2 (x)\},$$ thus 
   $$\int_I \Phi (|f-(\frac{P_1+P_2}{2})|)\, dx < \frac{1}{2} \int_I \Phi (|f-P_1|) \, dx+ \frac{1}{2} \int_I \Phi (|f-P_2|)\, dx,$$
   and then we have a contradiction. 
   
   On the other hand, suppose there exists $P_3 \in S_n, \,\, P_3 \neq 0$ and $0\le |P_3| \le \frac{c}{2},$ such that $Z(f-P_1) \subseteq Z(P_3), \, P_1 \in \mu_{\Phi} (f/S_n)$  and assume
   $0\le |f-P_1|(x) \le c,$ a. e. $x$. 

   Now,  for $h (x) = |P_3 (x)| sgn (f-P_1) (x),$ we have 
   $$\int_{[a,b] \cap \{f\neq P_1\}} \varphi (|h|) sgn (h) Q\, dx = \int_{[a,b] \cap \{f\neq P_1\}} \varphi (|h|) sgn (f-P_1) Q\, dx$$
   and since $\varphi(|h|) = \varphi(|f-P_1|) = k,$ 
   we have 
   $$\int_{[a,b] \cap \{f\neq P_1\}} \varphi (|h|) sgn (h) Q\, dx = \int_{[a,b] \cap \{f\neq P_1\}} \varphi (|f-P_1|) sgn (f-P_1) Q\, dx,$$
   and since $P_1 \in \mu_{\Phi} (f/S_n)$ and $Z(f-P_1) \subseteq Z(h),$ we get
   $$\int_{[a,b] \cap \{h\neq 0\}} \varphi (|h|) \,sgn (h) \,Q \, dx\le \varphi(0) \int_{Z(h)} |Q|\, dx,$$
   which implies, since \ref{remark caracterizacion} that $0\in \mu_{\Phi} (h/S_n).$

   Now 
   $$\int_{[a,b]} \Phi (|h- \varepsilon P_3|) \, dx= \int_{[a,b]} \Phi ((h- \varepsilon P_3) \,sgn (h- \varepsilon P_3)\, dx,$$
   and taking into account $\Phi (x) = k x,$ for $x\le c$ we get
   $$\int_{[a,b]} \Phi (|h- \varepsilon P_3|) \, dx= \int_{[a,b]} k\, h \, sgn(h-\varepsilon P_3) \, dx- \varepsilon \int_{[a,b]} k\, P_3\, sgn(h-\varepsilon P_3)\, dx, $$
   and then, for $x\in [a,b],\,\, h(x)-\varepsilon P_3 (x) \neq 0,$ it holds $sgn (h- \varepsilon P_3) (x) = sgn (f-P_1) (x),$
   thus
   $$\int_{[a,b]} \Phi (|h- \varepsilon P_3|)\, dx = \int_{[a,b]} k\, h\, sgn(f-P_1)\, dx - \varepsilon \int_{[a,b]} k\, P_3\, sgn(f-P_1)\, dx.$$
   Now, we use remark \ref{remark caracterizacion} to obtain 
   $$- \varepsilon \int_{[a,b]} k\, P_3\,  sgn(f-P_1)\, dx \le \varepsilon \varphi (0) \int_{Z(f-P_1)} k\, |P_3|\, dx,$$
   and then
   $$\int_{[a,b]} \Phi (|h- \varepsilon P_3|) \, dx\le \int_{[a,b]} k\, h\, sgn(f-P_1)\, dx + \varepsilon \varphi (0) \int_{Z(f-P_1)} k\, |P_3|\, dx.$$
   Thus, since $Z(f-P_1) \subseteq Z(P_3),$ we have $\int_{Z(f-P_1)}  |P_3| \, dx=0,$ then
   $$\int_{[a,b]} \Phi (|h- \varepsilon P_3|)\, dx \le \int_{[a,b]} k\, h\, sgn(f-P_1)\, dx = \int_{[a,b]} k\, h\, sgn(h) \, dx= \int_{[a,b]} \Phi (|h|)\, dx, $$
   which implies that $\varepsilon P_3 \in \mu_{\Phi} (h/S_n),$ for every $\varepsilon, \,\, 0< \varepsilon <1.$
     
   \end{proof}

   Finally, we set an uniqueness result of $\mu_{\Phi} (f/S_n),$ where a specific convex function $\Phi$ and a suitable 1-space  $S_n$ are considered. 

   \begin{thm}
   Let $\Phi \in \Psi$, $\Phi(x) = \int_0^x \varphi (t) \, dt,$ where $\varphi \in \Im$ is non continuous in a decreasing positive sequence $a_n,$ which converges to $0.$ Let $S_n$ be a 1-space such that every nonzero $P\in S_n$ has only a finite amount of zeros on $[a,b]$. Then for every $f\in C([a,b])$ the best approximation set $\mu_{\Phi} (f/S_n)$ is a singleton. 
   \end{thm}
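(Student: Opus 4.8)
The plan is to argue by contradiction, imitating the previous uniqueness proofs but exploiting the fact that the hypothesis on $\varphi$ prevents $\Phi$ from being affine on \emph{any} interval whose interior meets $\{a_n\}$. We may assume $f\notin S_n$, since otherwise $f$ itself is the only best approximation. Existence of some $P_1\in\mu_\Phi(f/S_n)$ is given by Theorem 2.3 in \cite{BFL}; suppose there is $P_2\in\mu_\Phi(f/S_n)$ with $P_2\ne P_1$, and put $g_i=f-P_i$, $u=\min(|g_1|,|g_2|)$, $v=\max(|g_1|,|g_2|)$, which are continuous on $[a,b]$. As in Lemma \ref{lema 2 mejores app}, $\tfrac12(P_1+P_2)$ is again a best approximation, which forces $\Phi\bigl(\tfrac{|g_1|+|g_2|}{2}\bigr)=\tfrac12\Phi(|g_1|)+\tfrac12\Phi(|g_2|)$ a.e.\ (using $g_1g_2\ge0$ a.e., so that $|g_1+g_2|=|g_1|+|g_2|$); by convexity of $\Phi$ this means $\Phi$ is affine on $[u(x),v(x)]$ for a.e.\ $x\in[a,b]$.

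First I would upgrade this to an everywhere statement. Since $\varphi$ is non-decreasing and discontinuous at each $a_n$, we have $\varphi^-(a_n)<\varphi^+(a_n)$, so $\Phi$ is not affine on any interval containing some $a_n$ in its interior (an affine piece of $\Phi$ would make $\varphi$ constant, hence continuous, across $a_n$). Because $u,v$ are continuous, the condition $u(x)<a_n<v(x)$ holds on an open set, on which $\Phi$ fails to be affine on $[u(x),v(x)]$; as that set of $x$ has measure zero, the open set is empty. Thus for \emph{every} $x\in[a,b]$ the open interval $(u(x),v(x))$ contains no $a_n$; equivalently $\{u<a_n\}\subseteq\{v\le a_n\}$ for every $n$.

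Next I would invoke the $1$-space structure. By Lemma \ref{lema14} the sets $\{f=P_1\}$ and $\{f=P_2\}$ are nonempty, and since $P_1-P_2$ is a nonzero element of $S_n$ it has finitely many zeros, so $\{P_1=P_2\}$ is finite. If some $x_0\in\{f=P_1\}$ were not in $\{f=P_2\}$, then $u(x_0)=0$ and $v(x_0)=|f(x_0)-P_2(x_0)|>0$, so $(u(x_0),v(x_0))$ would contain $a_n$ for $n$ large, contradicting the previous step. Hence $\{f=P_1\}\subseteq\{f=P_2\}$, and symmetrically the reverse inclusion, so $Z:=\{f=P_1\}=\{f=P_2\}$ is a nonempty finite subset of $\{P_1=P_2\}$; note also $\{u=0\}=\{f=P_1\}\cup\{f=P_2\}=Z$.

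The main obstacle is the last step, dealing with the remaining situation where all these coincidence sets are finite. Fix $x_0\in Z$ and, for large $n$, let $C_n$ be the connected component of the relatively open set $\{x\in[a,b]:u(x)<a_n\}$ that contains $x_0$. The $C_n$ are nested subintervals of $[a,b]$, and $\bigcap_n C_n$ is a subinterval of $\{u=0\}=Z$ containing $x_0$, hence equals $\{x_0\}$; therefore the endpoints of $C_n$ converge to $x_0$, and since $a<b$, for all large $n$ the interval $C_n$ has an endpoint $c_n\in(a,b)$. At such a point, $u(c_n)=a_n$ (it is a boundary point of $\{u<a_n\}$ while $u<a_n$ throughout $C_n$), and $v\le a_n$ on $C_n$ gives $v(c_n)\le a_n$; as $v\ge u$ we get $v(c_n)=u(c_n)=a_n$, i.e.\ $|g_1(c_n)|=|g_2(c_n)|=a_n\ne0$. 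Since $g_1g_2\ge0$ and both functions are continuous and nonzero at $c_n$, they agree there, so $P_1(c_n)=P_2(c_n)$, i.e.\ $c_n\in\{P_1=P_2\}$. The $c_n$ cannot be eventually constant (a single point cannot realize the distinct values $a_n$ of $u$), so $\{c_n\}$ is an infinite subset of the finite set $\{P_1=P_2\}$ — a contradiction. Hence $P_2=P_1$, and $\mu_\Phi(f/S_n)$ is a singleton.
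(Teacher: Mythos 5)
Your proof is correct, and although it runs on the same engine as the paper's --- the jump of $\varphi$ at each $a_n$ prevents $\Phi$ from being affine on any interval containing $a_n$ in its interior, so the pointwise equality $\Phi\bigl(\tfrac{|g_1|+|g_2|}{2}\bigr)=\tfrac12\Phi(|g_1|)+\tfrac12\Phi(|g_2|)$ forced by optimality of $\tfrac12(P_1+P_2)$ cannot hold where $[u(x),v(x)]$ straddles some $a_n$ --- the route to the contradiction is genuinely different. The paper localizes: it extracts a single coincidence point $x_0$ of $P_1$ and $P_2$ from $\int_a^b\bigl(\Phi(|g_1|)-\Phi(|g_2|)\bigr)dx=0$, uses the finite-zero hypothesis only to make $x_0$ isolated, and then searches for a small interval on which $|g_2|<a_n<|g_1|$ so as to contradict optimality of the midpoint directly; the sign normalization and the choice of $\delta_1$ there are left somewhat informal. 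You globalize: you first show the relatively open sets $\{u<a_n<v\}$ are empty outright (an open null set is empty), then bring in the $1$-space hypothesis through Lemma \ref{lema14} to force $\{f=P_1\}=\{f=P_2\}=\{u=0\}$ to be a nonempty finite set, and finally run the nested-component argument to manufacture infinitely many points $c_n$ with $u(c_n)=v(c_n)=a_n$, hence infinitely many zeros of $P_1-P_2$; your terminal contradiction is thus with the finite-zero hypothesis rather than with optimality. This buys you two things: it handles cleanly the sign issues and the possibility that $|g_1|$ dips back below $a_n$, and it makes explicit where the $1$-space property enters (the paper's argument obtains $x_0$ by a different, integral-mean device and never visibly invokes the positive element $h$). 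In a final write-up you should only spell out the two standard facts you rely on implicitly: that equality at the midpoint forces a convex function to be affine on the whole subinterval $[u(x),v(x)]$, and that a boundary point $c_n\in(a,b)$ of the component $C_n$ satisfies $u(c_n)=a_n$ exactly, since maximality of the component excludes $c_n$ from the open set $\{u<a_n\}$ while continuity from inside $C_n$ gives $u(c_n)\le a_n$.
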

   \begin{proof}
   Suppose that $P_1, P_2\in\mu_{\Phi} (f/S_{n})$ with $P_1 \neq P_2$. Since
 $\int_{a}^{b}\Phi(|f-P_1|)-\Phi(|f-P_2|)dx=0$,
 the function $\Phi(|f-P_1|)-\Phi(|f-P_2|)$ cannot be  positive in $[a,b]$.
Then, since $\Phi(0)=0$ and  Lemma \ref{lema 2 mejores app}, there exists $x_0\in[a,b]$ such that $f(x)-P_1(x_0)=f(x)-P_2(x_0)$. 
 
Actually, $x_0$ is a zero of $P_{1}-P_2$, which is a nonzero element of $S_n$. Therefore,  $x_0$ is an isolated point in $[a,b]$ and there exists a positive real number
  $\delta_{0}>0$ such that for every $x\in(x_{0},x_{0}+\delta_{0}]$, we assume without loss of generality that $f(x)-P_1(x)>f(x)-P_{2}(x)>0$.
 
 Also there exists a real number $x_{1}\in[x_{0},x_{0}+\delta_{0}]$ such that $$(f-P_1)(x_{1})=\max_{x\in[x_{0},x_{0}+\delta_{0}]}(f-P_1)(x)$$
 
 Considering that ${a_n}\rightarrow 0$, there must be some $n$ such that $a_{n}<(f-P_1)(x_{1})$. Due to $f-P_1$ continuity,  there is a real number $x_{2}\in[x_{0},x_{0}+\delta_{0}]$ such that $(f-P_1)(x_{2})= a_{n}$.
 
 For this $x_{2}$ we can find some $\delta_{1}>0$ such that $(f-P_{2})(x)<a_{n}<(f-P_1)(x)$ for every $x\in(x_{2},x_{2}+\delta_{1})\subset[x_{0},x_{0}+\delta_{0}]$.
 If there were no such  $\delta_{1}>0$, then $f-P_1$ would not reach up to its maximum at $(f-P_1)(x_{1})$.
 
 Finally, if we set $I:=(x_{2},x_{2}+\delta_{1})$, we can see that
 \begin{eqnarray*}
 \int_{[a,b]} \Phi(|f-(\frac{P_1+P_{2}}{2})|)\, dx &=&\int_{I} \Phi(|f-(\frac{P_1+P_{2}}{2})|)\, dx+\int_{[a,b]-I} \Phi(|f-(\frac{P_1+P_{2}}{2})|)\, dx\\
                                              & < & \frac{1}{2}\int_{I}\Phi(|f-P_1|)\, dx+\frac{1}{2}\int_{I}\Phi(|f-P_{2}|)\, dx+\\
                                              &~~ &  +\frac{1}{2}\int_{[a,b]-I}\Phi(|f-P_1|)\, dx+\frac{1}{2}\int_{[a,b]-I}\Phi(|f-P_{2}|)\, dx\\
                                              &=& \frac{1}{2}\int_{[a,b]}\Phi(|f-P_1|)\, dx+\frac{1}{2}\int_{[a,b]}\Phi(|f-P_{2}|)\, dx.
 \end{eqnarray*}

This is a contradiction since $\frac{P_1+P_{2}}{2}\in S_n$.	
   \end{proof}



\bibliographystyle{amsplain}

\end{document}